\theoremstyle{plain}
\newtheorem{theorem}{Theorem}
\numberwithin{equation}{section}
\newcommand{\ra}{\rightarrow}
\begin{document}

\title {Julia operators and Halmos dilations}

\date{}

\author[P.L. Robinson]{P.L. Robinson}

\address{Department of Mathematics \\ University of Florida \\ Gainesville FL 32611  USA }

\email[]{paulr@ufl.edu}

\subjclass{} \keywords{}

\begin{abstract}

We offer a simple direct proof of the unitarity of the Julia operator associated to a contraction $A$, from which follow the intertwining identity $(I - A A^*)^{1/2} A = A (I - A^* A)^{1/2}$ and the unitarity of Halmos dilations. 

\end{abstract}

\maketitle

\medbreak

Let $A : \mathbb{K} \ra \mathbb{H}$ be a contraction from the complex Hilbert space $\mathbb{K}$ to the complex Hilbert space $\mathbb{H}$. The associated {\it Julia operator} is the unitary operator $J_A \in B(\mathbb{H} \oplus \mathbb{K})$ having $2 \times 2$ block form 

\[ J_A \; = \; 
\begin{bmatrix}
 (I - A A^*)^{1/2} & A \\
 - A^* & (I - A^* A)^{1/2}
\end{bmatrix} 
\]
\medbreak 
\medbreak
\noindent
where $(I - A A^*)^{1/2} \in B(\mathbb{H})$ and $(I - A^* A)^{1/2} \in B(\mathbb{K})$ denote the positive square-roots as usual. This operator is named in recognition of Gaston Julia [3] and features in an operator-theoretic M\"obius transformation approach [10] to the theorem of Parrott [4] on contractive completions of partially-filled block operators. 

\medbreak 

Our primary aim here is extremely modest: to offer a simple direct proof of the fact that $J_A$ is indeed unitary. As immediate consequences, we deduce a simplified proof of the standard intertwining identity 
$$(I - A A^*)^{1/2} A = A (I - A^* A)^{1/2}$$
\noindent
and a simplified proof of the fact that the Halmos [1] dilation associated to a contractive endomorphism of the single Hilbert space $\mathbb{H}$ is unitary. Our simplified proofs make effective use of the $2 \times 2$ block-operator context in which Julia operators and Halmos dilations arise. 

\medbreak 

\begin{theorem} \label{Julia}
If $A : \mathbb{K} \ra \mathbb{H}$ is a contraction, then $J_A \in B(\mathbb{H} \oplus \mathbb{K})$ is unitary. 
\end{theorem}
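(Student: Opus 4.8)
The plan is to write $J_A$ as a sum of a positive part and a skew-adjoint part and thereby reduce unitarity to a single commutation statement. Set $D_{A^*} = (I - AA^*)^{1/2} \in B(\mathbb{H})$ and $D_A = (I - A^*A)^{1/2} \in B(\mathbb{K})$, and put
$$D = \begin{bmatrix} D_{A^*} & 0 \\ 0 & D_A \end{bmatrix}, \qquad K = \begin{bmatrix} 0 & A \\ -A^* & 0 \end{bmatrix},$$
so that $J_A = D + K$. Since $D$ is block-diagonal with positive diagonal entries it is self-adjoint and positive, while $K^* = -K$; hence $J_A^* = D - K$.

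Next I would simply expand the two products. A direct multiplication gives $J_A^* J_A = D^2 - K^2 + (DK - KD)$ and $J_A J_A^* = D^2 - K^2 - (DK - KD)$. Computing the diagonal blocks, $D^2 = (I - AA^*) \oplus (I - A^*A)$ and $K^2 = (-AA^*) \oplus (-A^*A)$, whence $D^2 - K^2 = I$. Therefore
$$J_A^* J_A = I + (DK - KD), \qquad J_A J_A^* = I - (DK - KD),$$
and the whole theorem collapses to the assertion that $D$ and $K$ commute.

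The key observation driving the argument is that $D$ is a \emph{function} of $K^2$. Indeed, from $D^2 - K^2 = I$ we obtain $D^2 = I + K^2$, and since $\|A\| \le 1$ the operator $I + K^2 = (I-AA^*) \oplus (I - A^*A)$ is positive; as $D$ is itself positive, it is the unique positive square root of $I + K^2$, namely $D = \varphi(K^2)$ with $\varphi(t) = (1+t)^{1/2}$ applied through the continuous functional calculus for the self-adjoint operator $K^2$. Because $K$ commutes trivially with $K^2$, it commutes with every polynomial in $K^2$, and hence, on passing to the operator-norm limit furnished by uniform approximation of $\varphi$ by polynomials on the compact spectrum $\sigma(K^2) \subseteq [-1,0]$, it commutes with $D$ itself. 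Thus $DK - KD = 0$ and $J_A$ is unitary.

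I expect the only genuine obstacle to be this functional-calculus step: one must justify with care that commuting with $K^2$ forces commuting with its continuous-function image $D$, which rests on uniqueness of the positive square root together with Weierstrass approximation, both standard. As an immediate dividend, reading off the $(1,2)$-block of the identity $DK - KD = 0$ yields $D_{A^*} A = A D_A$, that is, $(I - AA^*)^{1/2} A = A (I - A^*A)^{1/2}$, exactly the intertwining identity promised in the introduction.
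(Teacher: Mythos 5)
Your proposal is correct and takes essentially the same route as the paper: the identical decomposition $J_A = D + K$ into a positive block-diagonal part and a skew-adjoint off-diagonal part, with unitarity reduced to the commutation $DK = KD$, which holds because $K$ commutes with $D^2 = I + K^2$ and hence with its positive square root $D$. The only difference is expository: you spell out via Weierstrass approximation the standard fact that the positive square root commutes with everything commuting with the square, whereas the paper simply invokes that fact.
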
 

\begin{proof} 
Introduce on $\mathbb{H} \oplus \mathbb{K}$ the skew-adjoint operator $C$ with block form 

\[
\begin{bmatrix}
0 & A \\
 - A^* & 0
\end{bmatrix} 
\]
\medbreak 
\medbreak 
\noindent 
and the positive operator $D$ with block form 

\[
\begin{bmatrix}
 (I - A A^*)^{1/2} & 0 \\
 0 & (I - A^* A)^{1/2}
\end{bmatrix} 
\]
\medbreak 
\medbreak 
\noindent 
so that 
$$J_A = D + C.$$
\medbreak 
\noindent 
The operator $C$ commutes with $D^2$ and hence commutes with its positive square-root $D$. Thus  
$$J_A^* J_A = (D - C) (D + C) = D^2 - C^2 = I$$
and $I = J_A J_A^*$ likewise, so $J_A$ is unitary as claimed. 
\end{proof} 

\medbreak 

In some presentations, the Julia operator is defined with columns switched; this produces an operator from $\mathbb{K} \oplus \mathbb{H}$ to $\mathbb{H} \oplus \mathbb{K}$ and thereby obstructs direct application of the square-root argument presented here. 

\medbreak 

As a first corollary, we deduce at once the standard intertwining identity. 

\medbreak 

\begin{theorem} \label{inter}
If $A : \mathbb{K} \ra \mathbb{H}$ is a contraction, then
$$(I - A A^*)^{1/2} A = A (I - A^* A)^{1/2}.$$
\end{theorem}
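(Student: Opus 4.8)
The plan is to extract the identity directly from the relation $J_A^* J_A = I$, which holds because $J_A$ is unitary by Theorem~\ref{Julia}. The key observation is that the desired intertwining identity is nothing other than the vanishing of an off-diagonal block of this product, so once unitarity is in hand the corollary requires only a block multiplication and an inspection.

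First I would record the adjoint. Since the two square-root blocks are positive (hence self-adjoint) and the off-diagonal blocks are $A$ and $-A^*$, the adjoint is
\[
J_A^* = \begin{bmatrix} (I-AA^*)^{1/2} & -A \\ A^* & (I-A^*A)^{1/2} \end{bmatrix}.
\]
Next I would form the product $J_A^* J_A$ and isolate its $(1,2)$ block, which works out to
\[
(I-AA^*)^{1/2} A - A\,(I-A^*A)^{1/2}.
\]
Since $J_A^* J_A = I$ and the $(1,2)$ block of the identity operator on $\mathbb{H}\oplus\mathbb{K}$ is $0$, this block must vanish, yielding exactly $(I-AA^*)^{1/2}A = A\,(I-A^*A)^{1/2}$.

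I do not expect any genuine obstacle here; the only thing to watch is the bookkeeping. The two diagonal blocks of $J_A^* J_A$ collapse to $(I-AA^*)+AA^* = I$ and $A^*A + (I-A^*A)=I$, so they merely reconfirm unitarity and carry no new information, while the single informative block is the off-diagonal one. One could equally read the identity (or its adjoint) off the $(2,1)$ block, or off $J_A J_A^* = I$; each route gives the same conclusion, so the only real decision is which product and which entry to display.
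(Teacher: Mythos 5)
Your proof is correct and, importantly, non-circular: the paper's proof of Theorem~\ref{Julia} makes no use of the intertwining identity (that is the whole point of the paper), so you may freely quote its conclusion here. Your route is, however, not quite the paper's. The paper never expands $J_A^* J_A$ in blocks; instead it reaches back into the \emph{proof} of Theorem~\ref{Julia}, where $J_A$ was split as $D + C$ with $D$ the block-diagonal positive part and $C$ the skew-adjoint off-diagonal part, and where the commutation $DC = CD$ was established (since $C$ commutes with $D^2 = I + C^2$, hence with its positive square root $D$). Comparing $(1,2)$ blocks of $DC$ and $CD$ gives $(I - AA^*)^{1/2}A = A(I - A^*A)^{1/2}$ at once, exhibiting the intertwining identity as literally the off-diagonal content of a commutation relation. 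Your argument instead treats Theorem~\ref{Julia} as a black box and extracts the identity from the $(1,2)$ block of $J_A^* J_A = I$; as you note, the diagonal blocks of this product collapse to $I$ by pure algebra, so the entire force of unitarity lives in the off-diagonal blocks. The two derivations are equivalent in substance: given $D^2 - C^2 = I$, one has $J_A^* J_A = I$ exactly when $DC = CD$, and the $(1,2)$ block of $J_A^* J_A - I$ is precisely the $(1,2)$ block of $DC - CD$. What your version buys is a cleaner dependence structure --- it cites only the statement of Theorem~\ref{Julia}, not its proof internals --- at the modest cost of computing $J_A^*$ and the full product; what the paper's version buys is an even shorter computation and the conceptual observation that the intertwining identity is a commutation relation in disguise.
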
 

\begin{proof} 
Simply compare off-diagonal blocks in the commutative identity $D C = C D$ of the proof for Theorem \ref{Julia}. 
\end{proof} 

As a second corollary, we deduce at once the unitarity of the Halmos dilation. 

\medbreak 

\begin{theorem} \label{Halmos}
If $A \in B(\mathbb{H})$ is a contraction, then its Halmos dilation 
\[
\begin{bmatrix}
A &  (I - A A^*)^{1/2} \\
 (I - A^* A)^{1/2} & -A^*
\end{bmatrix} \; \in \; B(\mathbb{H} \oplus \mathbb{H})
\]
\medbreak 
\noindent 
is unitary. 
\end{theorem}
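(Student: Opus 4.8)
The plan is to exhibit the Halmos dilation as the composition of the already-established unitary Julia operator with an elementary unitary that interchanges the two summands. Since $A \in B(\mathbb{H})$, we may specialize the construction above by taking $\mathbb{K} = \mathbb{H}$; then Theorem \ref{Julia} tells us that the Julia operator
\[ J_A \; = \;
\begin{bmatrix}
 (I - A A^*)^{1/2} & A \\
 - A^* & (I - A^* A)^{1/2}
\end{bmatrix}
\]
is a unitary element of $B(\mathbb{H} \oplus \mathbb{H})$.

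Next I would introduce the flip operator
\[ S \; = \;
\begin{bmatrix}
 0 & I \\
 I & 0
\end{bmatrix}
\; \in \; B(\mathbb{H} \oplus \mathbb{H}),
\]
which is a self-adjoint involution and therefore unitary. The key observation is that right-multiplication by $S$ interchanges the two columns of any $2 \times 2$ block operator; carrying out this multiplication on $J_A$ returns precisely the Halmos dilation. In other words, the Halmos dilation equals $J_A S$.

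With this identity in hand the conclusion is immediate: the product of the two unitaries $J_A$ and $S$ is again unitary, so the Halmos dilation is unitary. I expect no genuine obstacle here; the only point requiring care is recognizing the correct side on which the flip must act. As the remark preceding this theorem emphasizes, switching columns of a general Julia operator converts it into an operator from $\mathbb{K} \oplus \mathbb{H}$ to $\mathbb{H} \oplus \mathbb{K}$, which obstructs the square-root argument directly; it is exactly the coincidence $\mathbb{K} = \mathbb{H}$ in the present setting that keeps the flip an endomorphism of $\mathbb{H} \oplus \mathbb{H}$ and renders the composition $J_A S$ meaningful.
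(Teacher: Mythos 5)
Your proposal is correct and coincides with the paper's own proof: both write the Halmos dilation as $J_A$ followed on the right by the unitary flip operator on $\mathbb{H} \oplus \mathbb{H}$ (your $S$ is the paper's $F$), so that unitarity follows from Theorem \ref{Julia} and the fact that a product of unitaries is unitary. Your additional remarks on why the flip must act on the right and why $\mathbb{K} = \mathbb{H}$ matters are accurate and consistent with the paper's discussion of column-switched Julia operators.
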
 

\begin{proof} 
Simply note that  if $F \in B(\mathbb{H} \oplus \mathbb{H})$ is the (unitary) `flip' operator with block form 
\[
\begin{bmatrix}
 0 &  I \\
I & 0
\end{bmatrix} 
\]
\medbreak 
\noindent 
then the indicated Halmos dilation is precisely the composite $J_A F$. 
\end{proof} 

\medbreak 

We remark that historically, the traditional approach to these results has been to start from the standard intertwining identity of Theorem \ref{inter}, whence Theorem \ref{Julia} and Theorem \ref{Halmos} follow by matrix multiplication. Write $S = (I - A A^*)^{1/2}$ and $T = (I - A^* A)^{1/2}$ for convenience. Halmos [1] observes that 
$$S^2 A = (I - A A^*) A = A (I - A^* A) = A T^2$$
so that (by induction and linearity) $p(S^2) A = A p(T^2)$ when $p$ is any polynomial and therefore (by the Weierstrass approximation theorem) $f(S^2) A = A f(T^2)$ when $f$ is any continuous function; the case $f: [0, 1] \ra [0, 1] : t \mapsto t^{1/2}$ yields the standard intertwining identity. This very same approach is taken by Halmos in Problem 222 of {\it A Hilbert Space Problem Book} [2]. According to Sz.-Nagy [9], each contraction on $\mathbb{H}$ has a unitary {\it power} dilation. The proof of this fact presented in [5] makes use of the intertwining identity, following exactly the traditional justification due to Halmos; see page 467. The simplified construction of a unitary power dilation by Sch\"affer [8] again rests on this traditional justification. Sarason [6] surveys all of this and more, the traditional justification coming on page 196. The traditional justification also supports the theorem of Parrott [4] on contractive completions of partially-filled $2 \times 2$ block operators; combine (i) at the top of page 313 with the calculation at the top of page 316. Young [10] presents an alternative approach to the Parrott theorem, based on operator-theoretic M\"obius transformations; see Theorem 12.20 for the traditional justification there. The intertwining identity is important in the model theory originating with de Branges and Rovnyak: for example, it can be found on page 3 of [7], yet again with the traditional justification. This list of references involving the traditional justification is a mere sampling; it could be lengthened considerably.    

\medbreak 

The ubiquitous traditional justification of the intertwining identity essentially recapitulates the standard procedure whereby a positive operator $R$ is shown to have a unique positive square-root $R^{1/2}$, which commutes with every operator that commutes with $R$ itself. Our simplified justification forgoes this recapitulation, instead appealing directly to the square-root itself. By first proving Theorem \ref{Julia} we make entirely natural use of the $2 \times 2$ block-operator setting of the theory. In hindsight and in spirit, our simplified approach is thus kin to the Berberian route from the Fuglede theorem to its Putnam extension, which asserts that an intertwiner of two normal operators likewise intertwines their adjoints.  
\medbreak

\bigbreak

\begin{center} 
{\small R}{\footnotesize EFERENCES}
\end{center} 
\medbreak 

[1] P.R. Halmos, {\it Normal dilations and extensions of operators}, Summa Brasiliensis Mathematicae Vol. II, Ano VI, 125-134 (1950). 

\medbreak 

[2] P.R. Halmos, {\it A Hilbert Space Problem Book}, Graduate Texts in Mathematics {\bf 19}, Second Edition, Springer-Verlag (1982). 

\medbreak 

[3]  G. Julia, {\it Les projections des syst\`emes orthonormaux de l'espace hilbertien et les op\'erateurs born\'es}, Comptes Rendus de l'Academie des Sciences {\bf 219}, 8-11 (1944).  

\medbreak

[4] S. Parrott, {\it On a Quotient Norm and the Sz.-Nagy - Foia\c{s} Lifting Theorem}, Journal of Functional Analysis {\bf 30}, 311-328 (1978). 

\medbreak 

[5] F. Riesz and B. Sz.-Nagy, {\it Functional Analysis}, Frederick Ungar Publishing (1955); Dover Publications (1990). 

\medbreak 

[6] D. Sarason, {\it New Hilbert Spaces from Old}, in {\it Paul Halmos - Celebrating 50 Years of Mathematics}, 195-204, Springer-Verlag (1991). 

\medbreak 

[7] D. Sarason, {\it Sub-Hardy Hilbert Spaces in the Unit Disc}, University of Arkansas Lecture Notes in the Mathematical Sciences {\bf 10}, Wiley-Interscience (1994). 

\medbreak

[8] J.J. Sch\"affer, {\it On unitary dilations of contractions}, Proceedings of the American Mathematical Society {\bf 6}, 322 (1955). 

\medbreak 

[9] B. Sz.-Nagy, {\it Sur les contractions de l'espace de Hilbert}, Acta Scientiarum Mathematicarum {\bf 15}, 87-92 (1953). 

\medbreak 

[10] N. Young, {\it An introduction to Hilbert space}, Cambridge Mathematical Textbooks, Cambridge University Press (1988).

\medbreak

\end{document}